\newtheorem{theorem}{Theorem}
\newtheorem{lemma}{Lemma}
\newtheorem{cor}{Corollary}
\newcommand{\E}{\mathcal{E}}
\newcommand{\A}{\mathcal{A}}
\begin{document}

\title[Certain numbers on the groups of self-homotopy
equivalences] {Certain numbers on the groups of self-homotopy
equivalences}

\author{Ho Won Choi}
\address{Department of Mathematics\\Korea University\\Seoul 702-701, Korea}
\email{howon@korea.ac.kr}
\thanks{This work was supported by a Korea University Grant}

\author{Kee Young Lee}
\address{Department of Information and Mathematics\\Korea University\\Sejong 425-791, Korea}
\email{keyolee@korea.ac.kr}

\subjclass{Primary 55P10, 555Q05, 55Q52} \keywords{homotopy
equivalence, self closeness number, k-self equivalence}

\begin{abstract}
For a connected based space $X$, let $[X,X]$ be the set of all
based homotopy classes of base point preserving self map of $X$
and let $\E(X)$ be the group of self-homotopy equivalences of $X$.
We denote by $\A_{\sharp}^k(X)$ the set of homotopy classes of
self-maps of $X$ that induce an automorphism of $\pi_i(X)$ for
$i=0,1,\cdots,k$. That is, $[f]\in \A_{\sharp}^k(X)$ if and only
if $\pi_i(f):\pi_i(X)\to\pi_i(X)$ is an isomorphism for
$i=0,1,\cdots ,k$. Then, $\E(X)\subseteq\A_{\sharp}^k(X)\subseteq
[X,X]$ for a nonnegative integer $k$. Moreover, for a connected
CW-complex $X$, we have $\E(X)=\A_{\sharp}(X)$. In this paper, we
study the properties of $\A_{\sharp}^k(X)$ and discuss the
conditions under which $\E(X)=\A_{\sharp}^k(X)$ and the minimum
value of such $k$. Furthermore, we determine the value of $k$ for
various spaces, including spheres, products of spaces, and Moore
spaces.

\end{abstract}
\maketitle

\section{Introduction}\label{section1}

For a connected space $X$, we denote by $[X,X]$ the set of all
based homotopy classes of self maps of $X$. Then, $[X,X]$ is a
monoid with multiplication given by the composition of homotopy
classes. Let $\E(X)$ be the set of self-homotopy equivalences of
$X$. Then, $\E(X)$ is a group with the operation given by the
composition of homotopy classes. $\mathcal{E}(X)$ has been studied
extensively by various authors, including Arkowitz\cite{A1},
Maruyama\cite{AM}, Lee\cite{L}, Oka\cite{O}, Rutter\cite{R},
Sawashita\cite{Sa}, and Sieradski\cite{Si}. Several subgroups of
$\E(X)$ have also been studied. One of these is
$\mathcal{E}^n_{\sharp}(X)$ which consists of the self-homotopy
equivalences that induce the identity homomorphism on homotopy
groups $\pi_i(X)$, for $i=1,2,\cdots,n$. That is,
$\mathcal{E}^n_{\sharp}(X)=\{f\in \mathcal{E}(X)~|~\pi_i
(f)=id_{\pi_i(X)}~on~ \pi_i(X)~for~i=0,\cdots,n\},$ where $\pi_i
(f):\pi_i(X)\to \pi_i(X)$ is the induced homomorphism of a map
$f:X\to X$ and $n=\infty$ is available. When $n=\infty$, we simply
denote $\mathcal{E}^{\infty}_{\sharp}(X)$ as
$\mathcal{E}_{\sharp}(X)$. It is well known that
$\mathcal{E}(S^n)=Z_2$ and $\mathcal{E}_{\sharp}(S^n)=1$, where
$S^n$ is the $n$-dimensional sphere. Moreover,
$\mathcal{E}^n_{\sharp}(X)$ is a subgroup of $\mathcal{E}(X)$ and,
if $n\leq m$, $\mathcal{E}^m_{\sharp}(X) \subseteq
\mathcal{E}^n_{\sharp}(X).$ Thus, we have the following chain by
inclusion:

$$\mathcal{E}_{\sharp}(X)\subseteq\cdots\subseteq
\mathcal{E}^n_{\sharp}(X)\subseteq\cdots\subseteq
\mathcal{E}^1_{\sharp}(X)\subseteq\mathcal{E}(X).$$

We denote by $\A_{\sharp}^k(X)$ the set of homotopy classes of
self-maps of $X$ that induce an automorphism of $\pi_i(X)$ for
$i=0,1,\cdots,k$. That is, $[f]\in \A_{\sharp}^k(X)$ if and only
if $\pi_i(f):\pi_i(X)\to\pi_i(X)$ is an isomorphism for
$i=0,1,\cdots ,k$. Then, for any nonnegative integer $k$,
$\A_{\sharp}^k(X)$ is a submonoid of $[X,X]$ and always contains
$\E(X)$. $n=\infty$ is available and, if $n=\infty$, we simply
denote $\A_{\sharp}^{\infty}(X)$ as $\A_{\sharp}(X)$. If $k<n$,
then $\A_{\sharp}^ n(X)\subseteq \A_{\sharp}^k(X)$; thus, we have
the following chain by inclusion:
\begin{center}
$\E(X)\subseteq\A_{\sharp}(X)\subseteq... \subseteq
\A_{\sharp}^1(X)\subseteq \A_{\sharp}^0(X)=[X,X].$
\end{center}

For any connected CW-complex $X$, we have $\A_{\sharp}(X)=\E(X)$.
These two facts give rise to the question of what is the largest
$\A_{\sharp}^ n(X)$ that is equal to $\E(X)$. In other words, what
is the minimum $k$ such that $\E(X)=\A_{\sharp}^k(X)$? We denote
by $N\E(X)$ the least nonnegative integer $k$ such that
$\E(X)=\A_{\sharp}^k(X)$, and call it {\it the self-closeness
number of } $X$. That is,
$$N\E(X) =
min\{k~|~\mathcal{A}^k_{\sharp}(X)=\mathcal{E}(X) ~for~ k\geq
0\}.$$

In this paper, we investigate properties of the self-closeness
number of a space including the homotopy invariance, the relation
with the dimension of spaces and the relation with the
$n$-connectivity. From the results of this investigation, we
completely determine the value of $k$ in several specific cases.
Moreover, we give an equivalence relation on $\A_{\sharp}^k(X)$
and study the relations between the set of equivalence classes and
$\E(X)$, $\E_{\sharp}^n(X)$, or the self-closeness numbers.

Throughout this paper, all topological spaces are based and have
the based homotopy type of CW-complex. All maps and homotopies
will preserve the base points. For the spaces $X$ and $Y$, we
denote by $[X,Y]$ the set of homotopy classes of maps from $X$ to
$Y$. We do not distinguish between the notation of a map $X \to Y$
and that of its homotopy class in $[X, Y]$.

\section{Properties of $\A_{\sharp}^k(X)$ and the self-closeness number }\label{section2}

We begin by identifying $\A_{\sharp}^k(X)$ as a submonoid of
$[X,X]$.

\begin{lemma}
$\A_{\sharp}^k(X)$ is a submonoid of $[X,X]$ for any nonnegative
integer $k$.
\begin{proof}
Let $f, g\in \A_{\sharp}^k(X)$. Then, $\pi_i(f),\pi_i(g)\in
Aut(\pi_i(X))$ for $i=0,1,\cdots, k$, where $Aut(\pi_i(X))$ is the
automorphism group of $\pi_i(X)$. Thus, $\pi_i(f\circ
g)=\pi_i(f)\circ\pi_i(g)\in Aut(\pi_i(X))$ for $i=0,1,\cdots, k$.
It follows that $f\circ g\in \A_{\sharp}^k(X)$. Clearly, $f\circ
(g\circ h)=(f\circ g)\circ h$. Moreover, because
$\pi_i(id_X)=id_{\pi_i(X)}\in Aut(\pi_i(X))$ for $i=0,1,\cdots,
k$, $id_X\in \A_{\sharp}^k(X)$.
\end{proof}

\end{lemma}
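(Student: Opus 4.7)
The plan is to verify the two defining properties of a submonoid of $[X,X]$: closure under the composition operation, and containment of the identity element. Associativity of composition need not be checked, since it is inherited for free from the ambient monoid $[X,X]$.

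First I would dispatch the identity. By the functoriality of the homotopy group functor, $\pi_i(id_X) = id_{\pi_i(X)}$, which is trivially an automorphism of $\pi_i(X)$ for every $i$, in particular for $i = 0, 1, \ldots, k$. Hence $[id_X] \in \A_{\sharp}^k(X)$.

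Next, for closure, I would take arbitrary $[f], [g] \in \A_{\sharp}^k(X)$ and apply functoriality again: $\pi_i(f \circ g) = \pi_i(f) \circ \pi_i(g)$ for each $i$. By hypothesis each factor lies in $\mathrm{Aut}(\pi_i(X))$ for $i = 0, 1, \ldots, k$, and since $\mathrm{Aut}(\pi_i(X))$ is a group, the composite is again an automorphism. This gives $[f \circ g] \in \A_{\sharp}^k(X)$, completing the closure step.

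There is no real obstacle: the whole argument is a direct consequence of functoriality of $\pi_i$ together with the group structure on $\mathrm{Aut}(\pi_i(X))$. The only mild subtlety is that the $i=0$ case is covered as well, since $\pi_0(f)$ being an ``automorphism'' is interpreted here as a bijection of the pointed set of path components, and this property is likewise preserved under composition and satisfied by the identity.
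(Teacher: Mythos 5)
Your proof is correct and follows essentially the same argument as the paper: functoriality of $\pi_i$ gives closure under composition, and $\pi_i(id_X)=id_{\pi_i(X)}$ handles the identity. Your added remarks — that associativity is inherited from $[X,X]$ and that the $i=0$ case should be read as a bijection of pointed sets — are minor but welcome clarifications of points the paper glosses over.
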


In general, $\E(X)\neq \A_{\sharp}(X)$.

\

\noindent {\bf Example 1.} Let $X$ be a quasi-circle in the plane
$\mathbb{R}^2$; that is, $$X=\{(x,y)|~y=sin(\frac{1}{x}), 0< x\leq
\frac{1}{\pi}\}\cup\{(0,y)|-2\leq y\leq 1\}$$ $$\cup\{(x,-2)|0\leq
x\leq \frac{1}{\pi}\}\cup\{(\frac{1}{\pi},y)|-2\leq y\leq 0\}.$$

Then, for each $x_0\in X$, the inclusion map $i:x_0 \to X$ is a
weak homotopy equivalence, but not a homotopy equivalence. Thus,
the constant map $C:X\to X$ given by $C(x)=x_0$ is not a homotopy
equivalence. Therefore, $C$ does not belong to $\E(X)$. However,
as $\pi_i(X)=0$ for all $i$, $\pi_i(C)\in Aut(\pi_i(X))$ for all
$i$.

\

Whitehead theorem ensures that $\E(X)= \A_{\sharp}(X)$ provided
that $X$ is a CW-complex.

\begin{lemma}
If $X$ is a CW-complex, then $\E(X)= \A_{\sharp}(X)$.
\end{lemma}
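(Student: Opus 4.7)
The plan is to invoke Whitehead's theorem directly. Since the inclusion $\E(X)\subseteq\A_{\sharp}(X)$ was already observed in Section~\ref{section1} (every self-homotopy equivalence induces isomorphisms on all homotopy groups), only the reverse inclusion requires proof.

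First I would take an arbitrary $[f]\in\A_{\sharp}(X)$. By the definition of $\A_{\sharp}(X)=\A_{\sharp}^{\infty}(X)$, the induced map $\pi_i(f)\colon\pi_i(X)\to\pi_i(X)$ is an isomorphism for every nonnegative integer $i$. In particular, $f$ is a weak homotopy equivalence from the CW-complex $X$ to itself.

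Then I would apply Whitehead's theorem, which states that any weak homotopy equivalence between CW-complexes is a genuine homotopy equivalence. This immediately yields $[f]\in\E(X)$, which establishes $\A_{\sharp}(X)\subseteq\E(X)$ and completes the proof.

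The argument is essentially a one-line appeal to Whitehead's theorem, so there is no real obstacle; the only subtlety is to note that Example~1 shows the CW-hypothesis cannot be dropped, since on the quasi-circle every map induces isomorphisms on the trivial homotopy groups but the space is not homotopy equivalent to a point. This contrast is what motivates the CW assumption in the statement.
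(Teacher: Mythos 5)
Your proof is correct and follows exactly the route the paper intends: the paper states this lemma with no written proof beyond the remark that ``Whitehead theorem ensures'' the equality, and your appeal to Whitehead's theorem for the inclusion $\A_{\sharp}(X)\subseteq\E(X)$ is precisely that argument spelled out. Nothing further is needed.
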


In general, $\A_{\sharp}^{k-1}(X)\neq\A_{\sharp}^k(X)$.

\

\noindent{\bf Example 2.} Let $M(\mathbb{Z}_4,5)$ be a Moore
space. Then, by the results in \cite{AT},
$[M(\mathbb{Z}_4,5),M(\mathbb{Z}_4,5)]\cong \mathbb{Z}_4\oplus
\mathbb{Z}_2$. Since $M(\mathbb{Z}_4,5)$ is 4-connected,
$\pi_i(M(\mathbb{Z}_4,5))=0$ for $ i=0,1,2,3, 4$. Thus,
$\A_{\sharp}^{4}(M(\mathbb{Z}_4,5))=[M(\mathbb{Z}_4,5),M(\mathbb{Z}_4,5)]\cong
\mathbb{Z}_4\oplus \mathbb{Z}_2$ and, for
$f\in[M(\mathbb{Z}_4,5),M(\mathbb{Z}_4,5)]$,
 $f\in\A_{\sharp}^5(M(\mathbb{Z}_4,5))$ if and only if $\pi_5(f)\in
 Aut(\pi_5(M(\mathbb{Z}_4,5)))$.
Therefore,
$\A_{\sharp}^5(M(\mathbb{Z}_4,5))\cong\mathbb{Z}_4^{\ast}\times\mathbb{Z}_2$,
where $\mathbb{Z}_4^{\ast}$ is the automorphism group of
$\mathbb{Z}_4$. Rutter \cite{R} showed that
$\E(M(\mathbb{Z}_4,5))\cong
\mathbb{Z}_4^{\ast}\times\mathbb{Z}_2$. Consequently,
$$\A_{\sharp}^5(M(\mathbb{Z}_4,5))=\E(M(\mathbb{Z}_4,5))\cong\mathbb{Z}_4^{\ast}\times\mathbb{Z}_2
\cong\mathbb{Z}_2\times\mathbb{Z}_2.$$ Therefore, we have
$$\E(M(\mathbb{Z}_4,5))=\A_{\sharp}^5(M(\mathbb{Z}_4,5))\varsubsetneq
\A_{\sharp}^{4}(M(\mathbb{Z}_4,5))=[M(\mathbb{Z}_4,5),M(\mathbb{Z}_4,5)].$$

\

Example 2 ensures that the self-closeness number
$N\E(M(\mathbb{Z}_4,5)$ is 5. We will discuss details of the more
general Moore spaces $M(G,n)$ where $G$ is an abelian group in
Section 3.

The self-closeness number is a homotopy invariant.
\begin{theorem}
Let $X$ and $Y$ be CW-complexes. If $X$ and $Y$ have the same
homotopy type, then $N\E(X)=N\E(Y)$.

\end{theorem}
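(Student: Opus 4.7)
The plan is to exploit a homotopy equivalence $h : X \to Y$ with homotopy inverse $g : Y \to X$ and show that conjugation by $h$ yields a bijection $[X,X] \to [Y,Y]$ that simultaneously restricts to bijections $\E(X) \leftrightarrow \E(Y)$ and $\A_{\sharp}^k(X) \leftrightarrow \A_{\sharp}^k(Y)$ for every nonnegative integer $k$. Once this is established, the equality $\E(X) = \A_{\sharp}^k(X)$ is transported to the equality $\E(Y) = \A_{\sharp}^k(Y)$ and vice versa, forcing the two minima $N\E(X)$ and $N\E(Y)$ to agree.

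More concretely, first I would define $\phi : [X,X] \to [Y,Y]$ by $\phi(f) = h \circ f \circ g$ and $\psi : [Y,Y] \to [X,X]$ by $\psi(f') = g \circ f' \circ h$. Using $h \circ g \simeq \mathrm{id}_Y$ and $g \circ h \simeq \mathrm{id}_X$, a routine check shows $\phi \circ \psi = \mathrm{id}_{[Y,Y]}$ and $\psi \circ \phi = \mathrm{id}_{[X,X]}$, so $\phi$ is a bijection. If $f \in \E(X)$, then $\phi(f)$ is a composition of homotopy equivalences, hence lies in $\E(Y)$; the reverse inclusion follows symmetrically via $\psi$, giving $\phi(\E(X)) = \E(Y)$.

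Next I would verify that $\phi$ carries $\A_{\sharp}^k(X)$ onto $\A_{\sharp}^k(Y)$. For each $i \leq k$, the induced map $\pi_i(\phi(f))$ factors as
\begin{equation*}
\pi_i(Y) \xrightarrow{\pi_i(g)} \pi_i(X) \xrightarrow{\pi_i(f)} \pi_i(X) \xrightarrow{\pi_i(h)} \pi_i(Y).
\end{equation*}
Since $h$ and $g$ are homotopy equivalences between CW-complexes, $\pi_i(h)$ and $\pi_i(g)$ are isomorphisms, so $\pi_i(\phi(f))$ is an automorphism of $\pi_i(Y)$ if and only if $\pi_i(f)$ is an automorphism of $\pi_i(X)$. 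Running this argument for $i = 0, 1, \dots, k$ yields the desired restriction of $\phi$.

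Finally I would assemble the pieces: the two bijections $\E(X) \leftrightarrow \E(Y)$ and $\A_{\sharp}^k(X) \leftrightarrow \A_{\sharp}^k(Y)$ are the restrictions of the same map $\phi$, so $\E(X) = \A_{\sharp}^k(X)$ holds exactly when $\E(Y) = \A_{\sharp}^k(Y)$ holds. Taking the infimum over $k$ on each side gives $N\E(X) = N\E(Y)$. There is no serious obstacle here; the only point requiring a moment's care is ensuring that the induced-map factorization above is legitimate at the level of based homotopy classes, which is immediate because $\pi_i$ is functorial and homotopic maps induce equal homomorphisms on $\pi_i$.
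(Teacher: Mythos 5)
Your proposal is correct and follows essentially the same route as the paper: the authors likewise conjugate by a homotopy equivalence to get a bijection $\A_{\sharp}^k(X)\to\A_{\sharp}^k(Y)$ restricting to $\E(X)\to\E(Y)$ (their Lemma preceding the theorem), and then transport the equality $\E=\A_{\sharp}^k$ across it. No substantive differences to report.
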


 Before we prove
Theorem 1, we first show the following lemma .

\begin{lemma}
If $X$ and $Y$ have the same homotopy type, then there is a
one-to-one correspondence
$\psi:\A_{\sharp}^n(X)\to\A_{\sharp}^n(Y)$ that is a monoid
homomorphism for each nonnegative integer $n$. Moreover, the
restriction of $\psi$ to $\E(X)$ is a group homomorphism into
$\E(Y)$.

\begin{proof}
Let $\alpha:X\to Y$ be a homotopy equivalence with homotopy
inverse $\beta$. Define $\psi:\A_{\sharp}^n(X)\to\A_{\sharp}^n(Y)$
by $\psi (f)=\alpha \circ f\circ \beta$ for each $f\in
\A_{\sharp}^n(X)$.

Then, $\psi$ is well-defined, because $f_1\simeq f_2\Rightarrow
\alpha \circ f_1\circ \beta\simeq\alpha \circ f_2\circ \beta$ and
$\pi_i(\alpha \circ f\circ \beta)=\pi_i(\alpha) \circ \pi_i
(f)\circ \pi_i(\beta)$ is an isomorphism on $\pi_i(Y)$ for
$i=0,\cdots, n$.

If $\psi (f_1)=\psi (f_2)$, then $\alpha \circ f_1\circ
\beta\simeq\alpha \circ f_2\circ \beta$. This implies $f_1\simeq
f_2$. Thus, $\psi$ is one-to-one.

For $h\in \A_{\sharp}^n(Y)$, $\beta\circ h \circ\alpha\in
\A_{\sharp}^n(X)$ and $\psi (\beta\circ h
\circ\alpha)=\alpha\circ(\beta\circ h \circ\alpha)\circ \beta\simeq h$. Thus,
$\psi$ is onto.

Since
$$\psi(f\circ g)= \alpha
\circ (f\circ g)\circ \beta \simeq \alpha \circ (f\circ
\beta\circ\alpha\circ g)\circ \beta\simeq (\alpha \circ f\circ
\beta)\circ(\alpha \circ g\circ \beta)=\psi(f)\circ\psi(g),$$ $\psi$ is
a monoid homomorphism.

If $f\in\E(X)$, then $\alpha \circ f\circ \beta\in\E(Y)$. Thus,
$\psi:\E(X)\to\E(Y)$ is an isomorphism.

\end{proof}

\end{lemma}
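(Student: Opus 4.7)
The natural plan is to conjugate self-maps by a homotopy equivalence. Fix a homotopy equivalence $\alpha:X\to Y$ with homotopy inverse $\beta:Y\to X$, so that $\beta\circ\alpha\simeq\mathrm{id}_X$ and $\alpha\circ\beta\simeq\mathrm{id}_Y$. Define $\psi:\A_{\sharp}^n(X)\to\A_{\sharp}^n(Y)$ by $\psi(f)=\alpha\circ f\circ\beta$. The first step is to check that $\psi$ is well-defined and lands in $\A_{\sharp}^n(Y)$. Because $\alpha$ and $\beta$ are homotopy equivalences, $\pi_i(\alpha)$ and $\pi_i(\beta)$ are isomorphisms in every degree, so functoriality gives $\pi_i(\psi(f))=\pi_i(\alpha)\circ\pi_i(f)\circ\pi_i(\beta)$, which is an isomorphism for $i\leq n$ whenever $f\in\A_{\sharp}^n(X)$. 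Respect for homotopy classes is immediate.

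For bijectivity, injectivity follows by pre- and post-composing $\psi(f_1)\simeq\psi(f_2)$ with $\beta$ and $\alpha$, then collapsing $\beta\alpha\simeq\mathrm{id}_X$ on both ends. Surjectivity is witnessed by the explicit preimage $\beta\circ h\circ\alpha$ of $h\in\A_{\sharp}^n(Y)$, with $\psi(\beta h\alpha)\simeq h$ coming from $\alpha\beta\simeq\mathrm{id}_Y$; one must of course note first that $\beta h\alpha$ lies in $\A_{\sharp}^n(X)$, by the same functorial argument. For the monoid structure, the key (and only slightly non-routine) step is multiplicativity: one must insert $\beta\alpha\simeq\mathrm{id}_X$ between $f$ and $g$ to obtain
$$\psi(f\circ g)=\alpha\circ f\circ g\circ\beta\simeq\alpha\circ f\circ\beta\circ\alpha\circ g\circ\beta=\psi(f)\circ\psi(g).$$
The identity class is preserved, since $\psi(\mathrm{id}_X)=\alpha\circ\beta\simeq\mathrm{id}_Y$.

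For the final claim, if $f\in\E(X)$ then $\psi(f)=\alpha\circ f\circ\beta$ is a composition of three homotopy equivalences, hence lies in $\E(Y)$; explicitly, $\alpha\circ f^{-1}\circ\beta$ serves as an inverse up to homotopy. Thus $\psi$ restricts to a monoid homomorphism $\E(X)\to\E(Y)$, and since both are groups this is automatically a group homomorphism (indeed an isomorphism, since $\psi$ is already a bijection). I do not foresee a genuine obstacle; the one point that requires attention is multiplicativity, reflecting the general principle that conjugation by a ``two-sided inverse up to homotopy'' is a homomorphism precisely because one is allowed to insert the homotopy identity $\beta\alpha\simeq\mathrm{id}_X$ into the middle of a composition.
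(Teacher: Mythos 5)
Your proposal is correct and follows essentially the same route as the paper: conjugation $\psi(f)=\alpha\circ f\circ\beta$ by a homotopy equivalence, with injectivity and surjectivity via the explicit inverse assignment $h\mapsto\beta\circ h\circ\alpha$, and multiplicativity by inserting $\beta\circ\alpha\simeq\mathrm{id}_X$ in the middle. No substantive difference from the paper's argument.
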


\begin{proof} [Proof of Theorem 1]
It is sufficient to show that $\E(X)=\A_{\sharp}^n(X)$ if and only
if $\E(Y)=\A_{\sharp}^n(Y)$ for each nonnegative integer $n$. As
$\E(X)\subseteq\A_{\sharp}^n(X)$ and
$\E(Y)\subseteq\A_{\sharp}^n(Y)$, we will show
$\A_{\sharp}^n(X)\subseteq\E(X)$ if and only if
$\A_{\sharp}^n(Y)\subseteq\E(Y)$. Suppose
$\A_{\sharp}^n(X)\subseteq\E(X)$ and
$\psi:\A_{\sharp}^n(X)\to\A_{\sharp}^n(Y)$ is the one-to-one
correspondence in Lemma 3. If $f\in \A_{\sharp}^n(Y)$, then
$\psi^{-1}(f)\in \A_{\sharp}^n(X)\subseteq\E(X)$. By Lemma 3,
$\psi(\psi^{-1}(f))\in \E(Y)$. Thus, $f\in\E(Y)$. Consequently,
$\A_{\sharp}^n(Y)\subseteq\E(Y)$.

Similarly, if $\A_{\sharp}^n(Y)\subseteq\E(Y)$, then
$\A_{\sharp}^n(X)\subseteq\E(X)$.

\end{proof}

In Lemma 2, we showed that $M(\mathbb{Z}_4,5)$ is 4-connected
whereas its self-closeness number is 5. This is true in general.

\begin{lemma}
If $X$ is $n$-connected and $\E(X)\neq[X,X]$, then $N\E(X)\geq
n+1$.

\begin{proof}
Assume $N\E(X)=k\leq n$. Then, $\E(X)=\A_{\sharp}^k(X)$ for $k\leq
n$. However, because $\pi_i(X)=0$ for $i=0,\cdots, n$,
$$\A_{\sharp}^n(X)=\cdots=\A_{\sharp}^{k}(X)
=\cdots=\A_{\sharp}^0(X)=[X,X].$$ This contradicts the hypothesis.
Thus, $N\E(X)\geq n+1$.

\end{proof}
\end{lemma}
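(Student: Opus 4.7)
The plan is to argue by contradiction. Suppose, for the sake of contradiction, that $N\E(X) = k$ with $k \leq n$. By the definition of the self-closeness number this forces the equality $\E(X) = \A_{\sharp}^k(X)$, so to derive a contradiction I will show that under the $n$-connectivity hypothesis the submonoid $\A_{\sharp}^k(X)$ actually coincides with the full monoid $[X,X]$; combined with the hypothesis $\E(X) \neq [X,X]$ this will yield the desired contradiction.

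The key observation is that $n$-connectivity trivializes the automorphism conditions defining $\A_{\sharp}^i(X)$ for $i \leq n$. Explicitly, since $\pi_i(X) = 0$ for $i = 0, 1, \ldots, n$, every induced homomorphism $\pi_i(f)\colon \pi_i(X) \to \pi_i(X)$ is the identity of the trivial group and is therefore vacuously an automorphism. Consequently every homotopy class $[f] \in [X,X]$ lies in $\A_{\sharp}^i(X)$ for all $i \leq n$, so
\[
\A_{\sharp}^n(X) = \A_{\sharp}^{n-1}(X) = \cdots = \A_{\sharp}^0(X) = [X,X].
\]

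Now combine the two ingredients. Since $k \leq n$, the descending chain of inclusions displayed in Section~\ref{section1} gives $\A_{\sharp}^n(X) \subseteq \A_{\sharp}^k(X) \subseteq [X,X]$, and the preceding observation forces equality throughout; in particular $\A_{\sharp}^k(X) = [X,X]$. Substituting into $\E(X) = \A_{\sharp}^k(X)$ yields $\E(X) = [X,X]$, contradicting the hypothesis. Hence no such $k \leq n$ can exist, and $N\E(X) \geq n+1$.

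There is no real obstacle here; the only point that needs care is getting the direction of the chain of inclusions right, since the subscript $k$ behaves oppositely to what one might first expect: increasing $k$ imposes more conditions and therefore shrinks the submonoid, so it is the small values of $k$ (in particular all $k \leq n$ under the $n$-connectivity assumption) that recover the whole of $[X,X]$.
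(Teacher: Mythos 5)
Your proposal is correct and follows exactly the same route as the paper: assume $N\E(X)=k\leq n$, use $n$-connectivity to see that the defining conditions of $\A_{\sharp}^i(X)$ are vacuous for $i\leq n$, conclude $\A_{\sharp}^k(X)=[X,X]=\E(X)$, and contradict the hypothesis. Your version just spells out the vacuousness of the automorphism condition and the direction of the inclusion chain a bit more explicitly than the paper does.
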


\begin{cor}
Let $X$ be a CW-complex with cells of dimension $\geq n$, except
0-cells. Then, $N\E(X)\geq n$ provided that $\E(X)\neq [X,X]$.
\end{cor}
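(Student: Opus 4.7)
The plan is to reduce the corollary to the preceding lemma by showing that the cellular hypothesis forces $X$ to be $(n-1)$-connected. Once that is established, the preceding lemma, applied with connectivity parameter $n-1$, gives $N\E(X)\geq (n-1)+1 = n$ directly.

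First I would dispose of the degenerate ranges. For $n=0$ the bound is trivial since $N\E(X)\geq 0$ by definition, and for $n=1$ the hypothesis $\E(X)\neq[X,X]=\A_{\sharp}^0(X)$ already yields $N\E(X)\geq 1$. So the interesting case is $n\geq 2$, on which I would focus for the remainder.

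The key observation is that under the hypothesis the $0$-skeleton $X^{(0)}$ must reduce to a single point. This is because every cell of $X$ above the $0$-skeleton has dimension at least $n\geq 2$, so its attaching map is defined on a connected sphere $S^{k-1}$ with $k\geq 2$ and therefore has image in a single path-component of the previous skeleton. Inductively, $\pi_0(X)=\pi_0(X^{(0)})$, and connectedness of $X$ forces $X^{(0)}$ to consist of the basepoint alone. Cellular approximation then shows that any class in $\pi_i(X)$ with $i\leq n-1$ is represented by a cellular map whose image lies in $X^{(i)}=X^{(0)}=\{\ast\}$, so $\pi_i(X)=0$ for $i=0,\dots,n-1$. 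Thus $X$ is $(n-1)$-connected and the preceding lemma completes the proof.

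There is no real obstacle in this argument; the only technicality is the reduction $X^{(0)}=\{\ast\}$, which rests on connectedness of $X$ together with the absence of cells in dimensions $1,\dots,n-1$.
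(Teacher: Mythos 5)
Your proof is correct and follows the route the paper intends: the cell hypothesis forces $X$ to be $(n-1)$-connected (via cellular approximation, after reducing the $0$-skeleton to a point using connectedness), and then Lemma 4 with connectivity parameter $n-1$ gives $N\E(X)\geq n$. The paper leaves this deduction implicit, and your handling of the small cases $n=0,1$ and of the possibly multiple $0$-cells is a sound filling-in of the same argument.
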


The self-closeness number of a CW-complex is closely related to
the dimension of the space.

\begin{theorem} If $X$ is a CW-complex with dimension $n$, then
$N\E(X)\leq n$.

\begin{proof}
It is sufficient to show that $\A_{\sharp}^{n}(X)=\E(X)$. Let
$f\in\A_{\sharp}^{n}(X)$. Then $\pi_i(f)\in Aut(\pi_i(X))$ for
$i=0,1,\cdots, n$. By Theorem 5.1.32 in \cite{AGP}, the induced
map $f_{\sharp}:[X,X]\to[X,X]$ is onto. Thus, there is a map
$g:X\to X$ such that $f_{\sharp}(g)=id_X$, that is, $f\circ
g\simeq id_X$. Because $\pi_i(f)\circ \pi_i(g)=\pi_i(f\circ
g)=\pi_i(id_X)=id_{\pi_i(X)}$ and $\pi_i(f)$ is an isomorphism,
$\pi_i(g)\in Aut(\pi_i(X))$ for $i=0,1,\cdots, n$. Thus,
$g_{\sharp}:[X,X]\to[X,X]$ is also onto. Therefore, there is a map
$f':X\to X$ such that $g_{\sharp}(f')=id_X$, that is, $g\circ
f'\simeq id_X$. However, as $f\circ (g\circ f')\simeq f\circ
id_X\simeq f$ and $f\circ g\simeq id_X$, $f'\simeq f$. Thus
$g\circ f \simeq g\circ f'\simeq id_X$. Therefore, $g$ is a
homotopy inverse of $f$, which implies that $f\in \E(X)$.
\end{proof}

\end{theorem}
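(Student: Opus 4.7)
The plan is to show the inclusion $\A_{\sharp}^n(X) \subseteq \E(X)$, since the reverse inclusion holds by definition. Fix $f \in \A_{\sharp}^n(X)$, so $\pi_i(f)$ is an isomorphism for $i = 0, 1, \dots, n$. I want to construct a two-sided homotopy inverse for $f$.

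The key tool is an obstruction-theoretic fact: when $\dim X \leq n$ and a self-map $f \colon X \to X$ induces isomorphisms on $\pi_i(X)$ for $i \leq n$, the post-composition map $f_{\sharp} \colon [X,X] \to [X,X]$ is surjective (Theorem 5.1.32 of \cite{AGP}). The intuition is that cellular obstructions to extending homotopies out of $X$ live in groups $H^i(X; \pi_i(X))$ which vanish for $i > n$, so obstructions to lifting through $f$ disappear. Applying this to our $f$ produces $g \colon X \to X$ with $f \circ g \simeq \mathrm{id}_X$.

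Next I observe that $g$ also lies in $\A_{\sharp}^n(X)$. Indeed, from $f \circ g \simeq \mathrm{id}_X$ we get $\pi_i(f) \circ \pi_i(g) = \mathrm{id}_{\pi_i(X)}$ for $i \leq n$, and since $\pi_i(f)$ is already an automorphism, $\pi_i(g) = \pi_i(f)^{-1}$ is likewise an automorphism. Thus the same theorem applies to $g$, yielding $f' \colon X \to X$ with $g \circ f' \simeq \mathrm{id}_X$. A short cancellation argument then shows $f' \simeq f$: compose $f \circ g \simeq \mathrm{id}_X$ on the right with $f'$ to get $f \circ (g \circ f') \simeq f'$, and use $g \circ f' \simeq \mathrm{id}_X$ on the left side to get $f \simeq f'$. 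Therefore $g \circ f \simeq g \circ f' \simeq \mathrm{id}_X$, so $g$ is a genuine homotopy inverse and $f \in \E(X)$.

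The only real obstacle is Step 1, invoking the AGP obstruction-theoretic surjectivity result; once that is in hand, the remaining monoid-theoretic manipulation (promote a one-sided inverse to a two-sided one by running the same argument a second time) is routine. Note that the use of the dimension hypothesis is confined to this single invocation, which is precisely why $N\E(X) \leq \dim X$ and not something smaller in general.
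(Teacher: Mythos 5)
Your proposal is correct and follows essentially the same route as the paper's proof: invoke the surjectivity of $f_{\sharp}$ from Theorem 5.1.32 of \cite{AGP}, obtain a right homotopy inverse $g$, observe $g\in\A_{\sharp}^{n}(X)$, repeat to get $f'$, and cancel to conclude $g$ is a two-sided inverse. No substantive differences to report.
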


\begin{cor} Let $S^k$ be the $k$-dimensional sphere. Then,
$N\E(S^k)=k$.

\begin{proof}
As $dim(S^k)=k$, $N\E(S^k)\leq k$ by Theorem 2. It is well known
that $$\E(S^k)\cong \mathbb{Z}_2 \neq \mathbb{Z}\cong [S^k,
S^k].$$ However, because $S^k$ is $(k-1)$-connected, $N\E(S^k)\geq
k$ by Lemma 4. Thus, $N\E(S^k)=k$.
\end{proof}

\end{cor}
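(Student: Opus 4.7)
The plan is to prove $N\E(S^k)=k$ by sandwiching this value between two bounds, each coming from a result already established in the section. Since both directions are short and rely on stated machinery, the proof is essentially a matter of assembling the right ingredients rather than surmounting a real obstacle.

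First I would apply Theorem 2 to obtain the upper bound. The sphere $S^k$ is a CW-complex of dimension $k$, so Theorem 2 immediately gives $N\E(S^k)\le k$. Next I would apply Lemma 4 for the lower bound. Here I need two classical facts about spheres: $S^k$ is $(k-1)$-connected, and $\E(S^k)\cong\mathbb{Z}_2$ while $[S^k,S^k]\cong\mathbb{Z}$ (via the degree), so in particular $\E(S^k)\ne[S^k,S^k]$. With the hypothesis of Lemma 4 verified, it follows that $N\E(S^k)\ge (k-1)+1=k$.

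Combining the two inequalities yields $N\E(S^k)=k$, which is the claim. The one point worth being careful about is that Lemma 4 requires $\E(X)\ne[X,X]$, so I should not skip verifying that $\mathbb{Z}_2\ne\mathbb{Z}$; this is the only genuinely nontrivial input, and it is standard. No further computation is needed.
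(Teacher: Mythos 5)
Your proof is correct and follows exactly the paper's argument: the upper bound $N\E(S^k)\le k$ from Theorem 2 via $\dim(S^k)=k$, and the lower bound from Lemma 4 using $(k-1)$-connectivity together with $\E(S^k)\cong\mathbb{Z}_2\ne\mathbb{Z}\cong[S^k,S^k]$. Nothing to add.
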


In the above corollary, the self-closeness number of a sphere is
just its dimension. However, this is not true in general. Before
we give an example, we discuss the self-closeness numbers of
product spaces.

\begin{theorem} Let $X$ and $Y$ be CW-complexes. Then, we have
$$ N\E(X\times Y)\geq max\{N\E(X), N\E(Y)\}.$$

\begin{proof} Assume that $\E(X\times Y)=\A_{\sharp}^{k}(X\times
Y)$ for the nonnegative integer $k<max\{N\E(X), N\E(Y)\}$. We may
assume that $max\{N\E(X), N\E(Y)\}=N\E(X)$. Let
$f\in\A_{\sharp}^{k}(X)$ and $g\in\A_{\sharp}^{k}(Y)$. Then,
$\pi_i(f)\in Aut(\pi_i(X))$ and $\pi_i(g)\in Aut(\pi_i(Y))$ for
$i=0, \cdots, k$. Thus, $\pi_i(f)\times\pi_i(g)\in
Aut(\pi_i(X))\times Aut(\pi_i(Y))$. Let $F:\pi_i(X\times Y)\to
\pi_i(X)\times \pi_i(Y) $ be an isomorphism given by
$F[r]=([p_1r],[p_2r])$ for $r:S^i\to X\times Y$, where $p_i$ is
the projection to the $i$-th factor for $i=1,2$. Then, we have the
following commutative diagram:
$$\xymatrix@C=19mm{
    \pi_i(X\times Y) \ar[d]^{F} \ar[r]^{\pi_i(f\times g)}
    & \pi_i(X\times Y)\ar[d]^{F}\\
    \pi_i(X)\times \pi_i(Y)   \ar[r]^{\pi_i(f)\times\pi_i(g) }
    &\pi_i(X)\times \pi_i(Y).
\\
} $$ It follows that $\pi_i(f\times g)$ is an isomorphism for
$i=0,\cdots, k$. Thus,
 $f\times g \in \A_{\sharp}^{k}(X\times
Y)=\E(X\times Y)$. Therefore, $\pi_i(f\times g)$ is an isomorphism
for each nonnegative integer $i$, and so $\pi_i(f)\times\pi_i(g)$
is also an isomorphism for each nonnegative integer $i$.  Thus,
$\pi_i(f)$ and $\pi_i(g)$ are isomorphisms for each nonnegative
integer $i$. Consequently, $f\in\E(X)$ and $g\in\E(Y)$. As a
result, $\E(X)=\A_{\sharp}^{k}(X)$ and $\E(Y)=\A_{\sharp}^{k}(Y)$.
This contradicts the minimality of $N\E(X)$.

\end{proof}

\end{theorem}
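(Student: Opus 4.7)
The plan is to argue by contradiction, assuming $N\E(X\times Y) = k$ with $k < \max\{N\E(X), N\E(Y)\}$. Without loss of generality we may suppose the maximum is $N\E(X)$, so $k < N\E(X)$. By definition of the self-closeness number as a minimum, this forces $\E(X) \subsetneq \A_{\sharp}^{k}(X)$, so there is some $f \in \A_{\sharp}^{k}(X)$ with $f \notin \E(X)$. My strategy is then to produce a single element of $\A_{\sharp}^{k}(X\times Y) \setminus \E(X\times Y)$ built from $f$, which will contradict the assumption that $N\E(X\times Y) = k$.

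The concrete construction is $f \times id_Y : X\times Y \to X\times Y$. The key computational step is to identify the induced map on $\pi_i$ through the standard isomorphism $F : \pi_i(X\times Y) \to \pi_i(X)\times\pi_i(Y)$ given by the projections: under $F$, the map $\pi_i(f \times id_Y)$ corresponds to $\pi_i(f) \times id_{\pi_i(Y)}$. For $i = 0,1,\dots,k$ this is an automorphism (since $f \in \A_{\sharp}^{k}(X)$), so $f \times id_Y \in \A_{\sharp}^{k}(X\times Y)$. By our assumption this equals $\E(X\times Y)$, hence $f \times id_Y$ is a self-homotopy equivalence.

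In particular $\pi_i(f\times id_Y)$ is an isomorphism for every $i \geq 0$, and reading this through $F$ shows that $\pi_i(f)$ is an isomorphism for every $i \geq 0$. Thus $f \in \A_{\sharp}(X)$, and because $X$ is a CW-complex, Lemma~2 gives $\A_{\sharp}(X) = \E(X)$; so $f \in \E(X)$, contradicting the choice of $f$.

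The main obstacle, which is really the only non-formal point, is verifying that the projection-induced splitting $\pi_i(X\times Y) \cong \pi_i(X)\times\pi_i(Y)$ is natural with respect to product maps so that the commutative square identifying $\pi_i(f\times id_Y)$ with $\pi_i(f)\times id_{\pi_i(Y)}$ is valid; this is essentially the same diagram that appears in the author's approach. Everything else is a bookkeeping argument about the definition of $N\E$ and an appeal to Whitehead's theorem via Lemma~2.
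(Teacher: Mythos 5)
Your proof is correct and follows essentially the same route as the paper: assume $\E(X\times Y)=\A_{\sharp}^{k}(X\times Y)$ for some $k$ below the maximum, use the projection-induced isomorphism $\pi_i(X\times Y)\cong\pi_i(X)\times\pi_i(Y)$ to place a product map in $\A_{\sharp}^{k}(X\times Y)=\E(X\times Y)$, and then read off that the factor map induces isomorphisms on all homotopy groups, contradicting minimality. The only (harmless) difference is that you work with the single witness $f\times id_Y$ rather than a general $f\times g$ as in the paper.
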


\noindent{\bf Example 3.} By the results in \cite{Si}, $[S^1\times
S^3, S^1\times S^3]\cong \mathbb{Z}\oplus \mathbb{Z}\oplus
\mathbb{Z}_2$ and $\E(S^1\times S^3)\cong \mathbb{Z}_2\oplus
\mathbb{Z}_2\oplus \mathbb{Z}_2$. Thus, by the above theorem,
$$N\E(S^1\times S^3)\geq max\{N\E(S^1),N\E(S^3)\}=3.$$
However, as $dim(S^1\times S^3)=4$, $N\E(S^1\times S^3)\leq 4$.
Therefore, we have $N\E(S^1\times S^3)=3\text{ or }4$. If we give
a more detailed computation, we conclude that $N\E(S^1\times
S^3)=3$. As the dimension of the space is 4, the self-closeness
number is not equal to the dimension.

\section{ The self-closeness number of a Moore space }\label{section 3}

In this section, we determine the self-closeness numbers of
general Moore spaces.

Given an abelian group $G$ and an integer $n\geq 3$, let $M(G,n)$
be a Moore space. Then we obtain the following theorem.

 \begin{theorem}
 $\A_\sharp^n(M(G,n))=\E(M(G,n))$.
 \begin{proof}
Since $\E(M(G,n)) \subseteq \A_\sharp^n(M(G,n))$ in general,
 it is sufficient to show that $\A_\sharp^n(M(G,n)) \subseteq \E(M(G,n))$.
 Since $M(G,n)$ is $(n-1)$-connected space, the Hurewicz homomorphism
 $h_n : \pi_n(M(G,n))\to H_n(M(G,n))$ is an isomorphism.
 For any $f\in [M(G,n),M(G,n)]$, the diagram
 $$\xymatrix@C=10mm{
 \pi_n(M(G,n)) \ar[r]^{\pi_n(f)} \ar[d]_{h_n} &  \pi_n(M(G,n)) \ar[d]^{h_n}\\
 H_n(M(G,n))  \ar[r]_{H_n(f)} & H_n(M(G,n))}$$ commutes.
 Let $f$ be an element of $\A_\sharp^n(M(G,n))$. Then $\pi_i(f)$ is an isomorphism for $i\leq n$.
 Since $h_n$ is an isomorphism, $H_n(f)$ is an isomorphism.
 Since the Moore space has nontrivial homology only for dimension $n$, $f$ is a homotopy equivalence
 by the homology version of the Whitehead Theorem. Hence $\A_\sharp^n(M(G,n)) \subseteq  \E(M(G,n))$.
 \end{proof}
 \end{theorem}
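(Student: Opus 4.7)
The plan is to prove the nontrivial inclusion $\A_\sharp^n(M(G,n))\subseteq\E(M(G,n))$, since the reverse inclusion holds by definition of $\A_\sharp^n$. The strategy is to pass from homotopy information in the single ``interesting'' degree $n$ over to homology, and then invoke a Whitehead-type theorem.

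First I would record the structural features of $M(G,n)$ that make this work: because $n\geq 3$, the space is simply connected (in fact $(n-1)$-connected), and it has $H_n(M(G,n))\cong G$ with all other reduced homology groups trivial. The $(n-1)$-connectivity is what will let me apply the Hurewicz theorem at degree $n$ to get an isomorphism $h_n:\pi_n(M(G,n))\to H_n(M(G,n))$, and the simply-connectedness is what will license the homology form of the Whitehead theorem at the end.

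Next, I would take an arbitrary $f\in\A_\sharp^n(M(G,n))$. By definition $\pi_i(f)$ is an automorphism for $i=0,1,\dots,n$; the only content is at $i=n$. Using naturality of the Hurewicz homomorphism, I would set up the square with $h_n$ on both sides and $\pi_n(f)$, $H_n(f)$ as the horizontal arrows. Since $h_n$ is an isomorphism and $\pi_n(f)$ is an isomorphism, a diagram chase forces $H_n(f)$ to be an isomorphism.

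Finally, I would invoke the homology Whitehead theorem: $f$ is a self-map of a simply connected CW complex inducing an isomorphism on every (reduced) homology group, trivially in the degrees where the groups are zero and by the previous step in degree $n$. Hence $f$ is a homotopy equivalence, i.e.\ $f\in\E(M(G,n))$. I do not anticipate a genuine obstacle here; the only point that needs care is confirming that $M(G,n)$ is simply connected so the homology Whitehead theorem applies, which is the reason the hypothesis $n\geq 3$ is stated (and why the result also goes through at $n=2$ with the same argument).
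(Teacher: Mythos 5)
Your proposal is correct and follows essentially the same route as the paper: transfer the isomorphism $\pi_n(f)$ to $H_n(f)$ via the naturality of the Hurewicz isomorphism (available since $M(G,n)$ is $(n-1)$-connected), observe that homology is trivial in all other degrees, and conclude by the homology version of the Whitehead theorem. Your added remark that simple connectivity is what licenses the homology Whitehead theorem is a useful clarification, but the argument is the same.
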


Since $M(G,n)$ is $(n-1)$-connected space, $\A_\sharp^i(M(G,n)) =
[M(G,n),M(G,n)]$ for
 $i<n$. Moreover, since
 $[M(G,n),M(G,n)]\neq \E(M(G,n))$, $n$ is the minimum number in
 the set $\{k~|~\E(M(G,n)) = \A_\sharp^k(M(G,n))\}$. Thus, we have
 the following corollary.

 \begin{cor} $N\E(M(G,n))=n$ for $n\geq 3$.
 \end{cor}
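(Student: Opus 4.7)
The plan is to sandwich $N\E(M(G,n))$ between $n$ and $n$ using the two preceding ingredients: the theorem just proved and the $(n-1)$-connectivity of the Moore space.

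For the upper bound $N\E(M(G,n)) \leq n$, I would appeal directly to the theorem above, which establishes $\A_{\sharp}^n(M(G,n)) = \E(M(G,n))$. Since $N\E$ is by definition the least $k$ for which $\A_{\sharp}^k = \E$, this inequality is immediate without further work.

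For the lower bound $N\E(M(G,n)) \geq n$, I would exploit that $M(G,n)$ is $(n-1)$-connected for $n \geq 3$. Because $\pi_i(M(G,n)) = 0$ for $i = 0, 1, \ldots, n-1$, every self-map of $M(G,n)$ automatically induces an automorphism (vacuously, the identity of the trivial group) in each of these degrees. Hence $\A_{\sharp}^{i}(M(G,n)) = [M(G,n), M(G,n)]$ for all $i < n$. Applying Lemma 4 to $M(G,n)$ with connectivity $n-1$ then yields $N\E(M(G,n)) \geq n$, after which combining with the upper bound finishes the proof.

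The single hypothesis that needs to be checked — and the only obstacle worth flagging — is the nontriviality assumption $\E(M(G,n)) \neq [M(G,n), M(G,n)]$ required to invoke Lemma 4. This is easy but not formally recorded elsewhere: provided the abelian group $G$ is nontrivial, the constant self-map of $M(G,n)$ lies in $[M(G,n),M(G,n)]$ yet is not a homotopy equivalence, since $M(G,n)$ has nontrivial reduced homology $G$ in degree $n$ and so is not contractible. (In the degenerate case $G = 0$ the Moore space is a point and the corollary is vacuous.) With this checked, the sandwich argument above gives $N\E(M(G,n)) = n$.
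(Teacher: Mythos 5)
Your proof is correct and follows essentially the same route as the paper: the upper bound comes from Theorem 4 ($\A_{\sharp}^n(M(G,n))=\E(M(G,n))$) and the lower bound from the $(n-1)$-connectivity of $M(G,n)$ together with $\E(M(G,n))\neq[M(G,n),M(G,n)]$. Your explicit verification of that nontriviality hypothesis (via the constant map and the nonvanishing of $H_n$) is a small but welcome addition, since the paper merely asserts it.
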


In \cite{R}, Rutter showed that
$\mathcal{E}(M(\mathbb{Z}_q,n))=\mathbb{Z}_{(2,q)}\times
\mathbb{Z}_q^\ast$ where  $\mathbb{Z}_q^\ast$ is the automorphism group of
$\mathbb{Z}_q$ and $(2,q)$ is the greatest common divisor
of $2$ and $q$. From Theorem 4,
$\mathcal{A}^n_\sharp(M(\mathbb{Z}_q,n))=\mathcal{E}(M(\mathbb{Z}_q,n))$
for all $q>1$. Moreover, since
$\mathcal{E}(M(\mathbb{Z}_q,n))\subset\mathcal{A}^k_\sharp(M(\mathbb{Z}_q,n))
\subset\mathcal{A}^n_\sharp(M(\mathbb{Z}_q,n))$ if $n \leq k$, we have\\

\begin{center} $\mathcal{A}^k_\sharp(M(\mathbb{Z}_q,n)) = $
$\left\{\begin{matrix}
~~[M(\mathbb{Z}_q,n),M(\mathbb{Z}_q,n)]~&~\text{if}~k< n,\\
~~\mathbb{Z}^\ast_q~~& ~\text{if}~~k\geq n,~~ q~~ \text{is odd},\\
~~\mathbb{Z}_2\times\mathbb{Z}^\ast_q~~& ~\text{if}~~k\geq n,~~ q~~ \text{is even}.\\
\end{matrix}\right.$\\
\end{center}

\section{ An equivalence relation on $\mathcal{A}^n_\sharp(X)$ }\label{section 4}

We define a relation '$\simeq_n$' on $\mathcal{A}^n_\sharp(X)$ as
follows: for $f,g\in \mathcal{A}^n_\sharp(X)$, $f\simeq_n g$ if
$\pi_i(f)=\pi_i(g):\pi_i(X)\rightarrow \pi_i(X)$ for $i=0,\cdots,
n$. If $f\simeq_n g$, then we say $f$ {\it is} $n${\it -self
homotopic to} $g$. By definition, '$\simeq_n$' is an equivalence
relation. We denote by $\bar{f}$ the equivalence class of $f$ in
$\mathcal{A}^n_\sharp(X)$, and call this the $n$-{\it self
homotopy equivalence class of $f$ on $X$}. Moreover, we denote by
$\bar{\A}_\sharp^n(X)$ the set of all $n$-self homotopy
equivalence classes on $X$. $n=\infty$ is available, and we denote
$\bar{\A}_\sharp^\infty(X)$ as simply $\bar{\A}_\sharp(X)$.

In general, $\bar{\A}_\sharp^n(X)\neq \A_\sharp^n(X)$.

\

\noindent {\bf Example 4.} By \cite{AOS}, It was shown that
$\E_\sharp(S^1\times S^3)=\mathbb{Z}_2$. However,
$\A_\sharp(S^1\times S^3)=\E(S^1\times
S^3)=\mathbb{Z}_2\oplus\mathbb{Z}_2\oplus\mathbb{Z}_2$. Choose two
distinct elements $\alpha, \beta\in \E_\sharp(S^1\times
S^3)=\mathbb{Z}_2$. Then since
$\pi_i(\alpha)=id_{\pi_i(X)}=\pi_i(\beta)$,
$\bar{\alpha}=\bar{\beta}$ in $\bar{\A}_\sharp(S^1\times S^3)$.

\begin{theorem}
Let $X$ be a CW-complex.  Then, $\bar{\A}_\sharp^n(X)$ is a monoid
for $n<\infty$, and $\bar{\A}_\sharp(X)$ is a group.

\begin{proof} Let us define $\bar{f}\cdot\bar{g}=\overline{f\circ g}$ for
$f,g\in \bar{\A}_\sharp^n(X)$. Then, the operation '$\cdot$' is
well-defined and associative. Furthermore, $\overline{id_X}$ is an
element of $\bar{\A}_\sharp^n(X)$. Thus, $\bar{\A}_\sharp^n(X)$
has a monoid structure.

As $\A_\sharp(X)=\E(X)$ by Lemma 2, for each
$\bar{f}\in\A_\sharp(X)$, its representative $f$ belongs to
$\E(X)$. Thus, $f$ has the homotopy inverse $g$ in
$\E(X)=\A_\sharp(X)$. Then, $\bar{g}$ is the inverse element of
$\bar{f}$. As a result, $\bar{\A}_\sharp(X)$ has a group
structure.

\end{proof}
\end{theorem}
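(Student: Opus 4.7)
The plan is to transport the monoid/group structure already present on $\A_\sharp^n(X)$ to the quotient $\bar{\A}_\sharp^n(X)$ by composition of representatives, so the only real work is checking that the operation descends to equivalence classes; the group structure for $n=\infty$ is then immediate from Lemma 2.

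First I would set $\bar{f}\cdot\bar{g}:=\overline{f\circ g}$ and verify well-definedness. Suppose $f\simeq_n f'$ and $g\simeq_n g'$, that is $\pi_i(f)=\pi_i(f')$ and $\pi_i(g)=\pi_i(g')$ for $i=0,\dots,n$. By functoriality of $\pi_i$,
$$\pi_i(f\circ g)=\pi_i(f)\circ\pi_i(g)=\pi_i(f')\circ\pi_i(g')=\pi_i(f'\circ g'),$$
so $f\circ g\simeq_n f'\circ g'$. Moreover $f\circ g$ actually lies in $\A_\sharp^n(X)$ by Lemma 1, so the product of equivalence classes is a bona fide equivalence class in $\bar{\A}_\sharp^n(X)$.

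Associativity of $\cdot$ is inherited directly from associativity of composition in $[X,X]$, and $\overline{\mathrm{id}_X}$ is a two-sided identity because $\pi_i(\mathrm{id}_X)=\mathrm{id}_{\pi_i(X)}$, hence $\pi_i(f\circ\mathrm{id}_X)=\pi_i(f)=\pi_i(\mathrm{id}_X\circ f)$ for every $i\le n$. These remarks together give that $\bar{\A}_\sharp^n(X)$ is a monoid.

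For the group assertion, I would invoke Lemma 2: since $X$ is a CW-complex, $\A_\sharp(X)=\E(X)$. Hence any representative $f$ of a class $\bar{f}\in\bar{\A}_\sharp(X)$ is a genuine self-homotopy equivalence and possesses a homotopy inverse $g\in\E(X)=\A_\sharp(X)$. Since $f\circ g\simeq\mathrm{id}_X\simeq g\circ f$, we get $\pi_i(f\circ g)=\pi_i(\mathrm{id}_X)=\pi_i(g\circ f)$ for every $i$, so $\bar{f}\cdot\bar{g}=\overline{\mathrm{id}_X}=\bar{g}\cdot\bar{f}$; thus $\bar{g}=\bar{f}^{\,-1}$. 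The only point requiring care is the well-definedness of $\cdot$ on equivalence classes, which is handled by functoriality of $\pi_i$; everything else is formal once Lemmas 1 and 2 are in hand.
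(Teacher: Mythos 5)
Your proposal is correct and follows essentially the same route as the paper: define $\bar{f}\cdot\bar{g}=\overline{f\circ g}$, check it descends to classes, and use Lemma 2 to produce inverses when $n=\infty$. In fact you supply the well-definedness verification (via functoriality of $\pi_i$) that the paper merely asserts, so your write-up is a slightly more complete version of the same argument.
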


\begin{cor} Let $X$ be a CW-complex. If $N\E(X)=n$, then $\bar{\A}_\sharp^k(X)$ has a group
structure for $k\geq n$.
\end{cor}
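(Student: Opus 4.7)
The plan is to reduce the statement to the situation already handled by Theorem 3, by observing that the hypothesis $N\E(X)=n$ forces $\A_\sharp^k(X)$ to collapse onto $\E(X)$ for every $k\geq n$. Once this is done, the argument used to give $\bar{\A}_\sharp(X)$ its group structure applies verbatim at each finite level $k\geq n$.

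First I would unpack the definition of the self-closeness number. By definition of $N\E(X)=n$ we have $\E(X)=\A_\sharp^n(X)$, and the inclusion chain of Section~\ref{section1} gives
\[
\E(X)\subseteq \A_\sharp^k(X)\subseteq \A_\sharp^n(X)=\E(X)
\]
for every $k\geq n$. Hence $\A_\sharp^k(X)=\E(X)$ for all $k\geq n$, so every equivalence class in $\bar{\A}_\sharp^k(X)$ already has a representative that is a genuine self-homotopy equivalence. By Theorem 3, $\bar{\A}_\sharp^k(X)$ carries a monoid structure with product $\bar{f}\cdot\bar{g}=\overline{f\circ g}$ and identity $\overline{id_X}$, so only the existence of inverses remains.

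For the inverses, given $\bar{f}\in\bar{\A}_\sharp^k(X)$ pick any representative $f\in\A_\sharp^k(X)=\E(X)$ and let $g\in\E(X)$ be a homotopy inverse. Then $f\circ g\simeq id_X$ and $g\circ f\simeq id_X$, so $\pi_i(f\circ g)=\pi_i(id_X)=\pi_i(g\circ f)$ for every $i$, in particular for $i=0,\ldots,k$. Thus $f\circ g\simeq_k id_X\simeq_k g\circ f$, and consequently $\bar{f}\cdot\bar{g}=\overline{id_X}=\bar{g}\cdot\bar{f}$, so $\bar{g}$ is the two-sided inverse of $\bar{f}$ in $\bar{\A}_\sharp^k(X)$.

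There is no substantive obstacle here: the only mild subtlety is checking that one may pass from representatives in $\E(X)$ to classes in $\bar{\A}_\sharp^k(X)$, and this is immediate from $\A_\sharp^k(X)=\E(X)$. All well-definedness statements (that $\simeq_k$ respects composition and that the inverse class does not depend on the choice of representative) follow from the monoid part of Theorem 3 together with the functoriality of $\pi_i$, so no new verification is needed beyond the identification of $\A_\sharp^k(X)$ with $\E(X)$.
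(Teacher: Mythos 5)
Your argument is correct and is exactly the intended one: the hypothesis $N\E(X)=n$ together with the inclusion chain forces $\A_\sharp^k(X)=\E(X)$ for all $k\geq n$, after which the inverse construction from the proof of Theorem 5 (taking a homotopy inverse $g$ of a representative $f$ and noting $\overline{f\circ g}=\overline{id_X}=\overline{g\circ f}$) applies verbatim at level $k$. The paper leaves this corollary without an explicit proof precisely because this is the evident adaptation, so there is nothing to add.
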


Consider the $n$-self equivalence class $\bar{f}$ of $f\in
\A_\sharp^n(X)$. Then, $\bar{f}$ is a subset of $\A_\sharp^n(X)$.
In particular, $\overline{id_X}$ is a submonoid of
$\A_\sharp^n(X)$. For a given $g\in \A_\sharp^n(X)$, we define a
set $g\cdot \bar{f}$ as follows:
$$g\cdot\bar{f}:=\{g\circ\alpha|\alpha\in\bar{f}\}.$$
Similarly, we define the set $\bar{f}\cdot g$ as $$\bar{f}\cdot
g:=\{\beta\circ g|\beta\in\bar{f}\}.$$ In general, $g\cdot
\bar{f}\subseteq \overline{g\circ f}$, $\bar{f}\cdot g \subseteq
\overline{f\circ g }$, and $g\cdot \bar{f}\neq \bar{f}\cdot g$.
Moreover, $\E_\sharp^n(X)\subseteq \overline{id_X}\subseteq
\A_\sharp^n(X)$ for each positive integer $n$. In fact, if $f\in
\E_\sharp^n(X)$, then $\pi_i(f)=id_{\pi_i(X)}$ for $i=0,\cdots,n$.
However, $\pi_i(f)=id_{\pi_i(X)}=\pi_i(id_X)$. Thus, $f\in
\overline{id_X}$.

\begin{lemma} $\E_\sharp^n(X)$ is a normal subgroup of $\E(X)$ for
all $n$.

\begin{proof} Let $h\in\E_\sharp^n(X)$ and $f\in\E(X)$.
Then, for the homotopy inverse $g$ of $f$, we have
$$\pi_i(f\circ h\circ g)=\pi_i(f)\circ\pi_i( h)\circ \pi_i(g)=\pi_i(f)\circ
\pi_i(g)=\pi_i(id_X)=id_{\pi_i(X)}$$ for $i=0,\cdots,n$. Thus,
$f\circ h\circ g\in\E_\sharp^n(X)$.
\end{proof}

\end{lemma}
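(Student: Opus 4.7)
The plan is direct: the paper already notes in the introduction that $\E_\sharp^n(X)$ is a subgroup of $\E(X)$, so the only new content is normality. I would fix an arbitrary $h \in \E_\sharp^n(X)$ and an arbitrary $f \in \E(X)$ with chosen homotopy inverse $g$, and then verify that the conjugate $f \circ h \circ g$ lies in $\E_\sharp^n(X)$. Since $f \circ h \circ g$ is a composition of three homotopy equivalences, it is automatically in $\E(X)$, so the task reduces to checking the condition $\pi_i = \mathrm{id}_{\pi_i(X)}$ for $i = 0, 1, \ldots, n$.

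The main step is a short functoriality computation. By functoriality of $\pi_i$, I have $\pi_i(f \circ h \circ g) = \pi_i(f) \circ \pi_i(h) \circ \pi_i(g)$. The hypothesis $h \in \E_\sharp^n(X)$ kills the middle factor, giving $\pi_i(f) \circ \pi_i(g)$. Applying functoriality in the opposite direction, this equals $\pi_i(f \circ g) = \pi_i(\mathrm{id}_X) = \mathrm{id}_{\pi_i(X)}$. Iterating this for each $i \leq n$ completes the argument.

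There is essentially no obstacle to overcome: the proof is a mechanical diagram chase that relies only on two facts, namely that $\pi_i$ is a functor on based homotopy classes and that the induced homomorphisms of a homotopy equivalence and its inverse are mutually inverse isomorphisms. The only conceptual point worth highlighting is why the property $\pi_i = \mathrm{id}$ is preserved under conjugation while it would not be preserved under arbitrary composition: once the middle map is trivial on $\pi_i$, the outer factors $\pi_i(f)$ and $\pi_i(g)$ fuse back into the identity of $\pi_i(X)$ via the inverse relation in $\E(X)$, which is precisely what makes $\E_\sharp^n(X)$ normal rather than merely a subgroup.
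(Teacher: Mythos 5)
Your proposal is correct and follows essentially the same argument as the paper: conjugate $h$ by $f$ and its homotopy inverse $g$, use functoriality of $\pi_i$ to strip out the middle factor, and fuse $\pi_i(f)\circ\pi_i(g)$ back into $\pi_i(f\circ g)=\mathrm{id}_{\pi_i(X)}$. No substantive difference.
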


By the above lemma, we can consider the factor group
$\E(X)/\E_\sharp^n(X)$. Here we investigate the relation between
this factor group and the $n$-equivalence classes.

\begin{theorem} $\E(X)/\E_\sharp^n(X)$ is isomorphic to a subgroup
of $\bar{\A}_\sharp^n(X)$ for each positive integer $n$. In
particular, if $N\E(X)=n$, then $\bar{\A}_\sharp^n(X)\cong
\E(X)/\E_\sharp^n(X)$.

\begin{proof}
Define $\Psi:\E(X)/\E_\sharp^n(X)\rightarrow \bar{\A}_\sharp^n(X)$
by $\Psi(f\cdot \E_\sharp^n(X))=\bar{f}$. If
$f\cdot\E_\sharp^n(X)=g\cdot \E_\sharp^n(X)$ for $f,g\in \E(X)$,
then $g^{-1}\circ f\in\E_\sharp^n(X)$. Thus,
$id_{\pi_i(X)}=\pi_i(g^{-1}\circ f)=\pi_i(g^{-1})\circ \pi_i(f)$
for $i=0,\cdots,n$. It follows that $\pi_i(g)= \pi_i(f)$ for
$i=0,\cdots,n$ and $\bar{f}=\bar{g}$. Thus, we conclude that
$\Psi$ is well-defined.

If $\Psi(f\cdot \E_\sharp^n(X))=\Psi(g\cdot \E_\sharp^n(X))$, then
$\bar{f}=\bar{g}$ in $\bar{\A}_\sharp^n(X)$. Thus, $\pi_i(g)=
\pi_i(f)$ for $i=0,\cdots,n$. Therefore,
$id_{\pi_i(X)}=\pi_i(g)^{-1}\circ \pi_i(f)=\pi_i(g^{-1}\circ f)$
for $i=0,\cdots,n$. It follows that $g^{-1}\circ
f\in\E_\sharp^n(X)$. That is, $f\cdot\E_\sharp^n(X)=g\cdot
\E_\sharp^n(X)$. Thus, $\Psi$ is one-to-one.

Furthermore, since $\Psi((f\cdot \E_\sharp^n(X))\cdot(g\cdot
\E_\sharp^n(X)))=(f\circ g)\cdot \E_\sharp^n(X) =\overline{f\circ
g}=\bar{f}\cdot\bar{g}=\Psi((f\cdot
\E_\sharp^n(X))\cdot\Psi((g\cdot \E_\sharp^n(X))$, $\Psi$ is a
monomorphism.

In the case that $N\E(X)=n$, $\Psi$ is onto. In fact, if
$\bar{f}\in\bar{\A}_\sharp^n(X)$, then $f\in \A_\sharp^n(X)=\E(X)$
and $\Psi(f\cdot \E_\sharp^n(X))=\bar{f}$.

\end{proof}

\end{theorem}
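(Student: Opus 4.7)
The plan is to define $\Psi : \E(X)/\E_\sharp^n(X) \to \bar{\A}_\sharp^n(X)$ by $\Psi(f \cdot \E_\sharp^n(X)) = \bar{f}$ and to check that this is a well-defined, injective monoid homomorphism whose image is a subgroup, and that it is onto precisely when $N\E(X) = n$. The assignment lands in $\bar{\A}_\sharp^n(X)$ because $\E(X) \subseteq \A_\sharp^n(X)$. For well-definedness, I would suppose $f \cdot \E_\sharp^n(X) = g \cdot \E_\sharp^n(X)$ for $f, g \in \E(X)$; then $g^{-1} \circ f \in \E_\sharp^n(X)$, so $\pi_i(g^{-1}) \circ \pi_i(f) = id_{\pi_i(X)}$ for $i = 0, \ldots, n$, and post-composing with the isomorphism $\pi_i(g)$ yields $\pi_i(f) = \pi_i(g)$, which is exactly $\bar{f} = \bar{g}$.

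The same chain of equalities, read backwards, gives injectivity: if $\bar{f} = \bar{g}$ then $\pi_i(f) = \pi_i(g)$ for $i \leq n$, hence $\pi_i(g^{-1} \circ f) = id_{\pi_i(X)}$, so $g^{-1} \circ f \in \E_\sharp^n(X)$ and the cosets coincide. The homomorphism property $\Psi((f \cdot \E_\sharp^n(X))(g \cdot \E_\sharp^n(X))) = \overline{f \circ g} = \bar{f} \cdot \bar{g}$ is immediate from the two definitions of composition. This already establishes the first assertion; the image is a subgroup of the monoid $\bar{\A}_\sharp^n(X)$ because each element has an inverse coming from the homotopy inverse inside $\E(X)$.

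For the "in particular" claim I would invoke the hypothesis $N\E(X) = n$, which by definition gives $\A_\sharp^n(X) = \E(X)$. Then any $\bar{f} \in \bar{\A}_\sharp^n(X)$ admits a representative $f \in \E(X)$, and $\Psi(f \cdot \E_\sharp^n(X)) = \bar{f}$ shows $\Psi$ is onto, upgrading the embedding to an isomorphism. I do not foresee a serious obstacle: the whole argument is essentially the first isomorphism theorem applied to the homomorphism $f \mapsto \bar{f}$ on $\E(X)$, whose kernel is by construction $\E_\sharp^n(X)$. The one delicate point worth emphasizing is that without the hypothesis $N\E(X) = n$, an arbitrary class in $\bar{\A}_\sharp^n(X)$ need not have a representative in $\E(X)$, which is why surjectivity—and hence the full isomorphism rather than a mere embedding—requires the extra assumption.
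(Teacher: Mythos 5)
Your proposal is correct and follows essentially the same route as the paper: the same map $\Psi(f\cdot\E_\sharp^n(X))=\bar{f}$, the same verifications of well-definedness, injectivity, and the homomorphism property via $\pi_i(g^{-1}\circ f)=id_{\pi_i(X)}$, and the same use of $\A_\sharp^n(X)=\E(X)$ for surjectivity when $N\E(X)=n$. Your explicit remark that the image is a subgroup of the monoid $\bar{\A}_\sharp^n(X)$ (inverses coming from homotopy inverses in $\E(X)$) is a small but worthwhile addition that the paper leaves implicit.
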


\begin{cor} $\bar{\A}_\sharp(X)\cong \E(X)/\E_\sharp(X)$.
\end{cor}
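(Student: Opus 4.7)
The plan is to apply the proof of Theorem 5 with $n = \infty$, using Lemma 2 in place of the $N\E(X) = n$ hypothesis that was needed there to secure surjectivity.

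First I would verify that $\E_\sharp(X)$ is a normal subgroup of $\E(X)$; this is the $n = \infty$ analogue of Lemma 6, and the identical calculation $\pi_i(f \circ h \circ g) = \pi_i(f) \circ \pi_i(h) \circ \pi_i(g) = \pi_i(f) \circ \pi_i(g) = id_{\pi_i(X)}$ works for all $i$ simultaneously whenever $h \in \E_\sharp(X)$ and $g$ is the homotopy inverse of $f \in \E(X)$. Hence the quotient $\E(X)/\E_\sharp(X)$ is a bona fide group and the statement of the corollary is well-posed.

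Next I would define $\Psi : \E(X)/\E_\sharp(X) \to \bar{\A}_\sharp(X)$ by $\Psi(f \cdot \E_\sharp(X)) = \bar{f}$. Well-definedness, injectivity, and the monoid homomorphism property all follow from the same bookkeeping as in Theorem 5, with $n$ replaced by $\infty$ throughout: the condition $g^{-1} \circ f \in \E_\sharp(X)$ amounts to $\pi_i(g) = \pi_i(f)$ for every $i$, which is precisely the condition $\bar{f} = \bar{g}$ in $\bar{\A}_\sharp(X)$; and compatibility with composition is immediate from $\pi_i(f \circ g) = \pi_i(f) \circ \pi_i(g)$.

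The only step that requires a new input, and the one on which Theorem 5's finite-$n$ argument had to assume $N\E(X) = n$, is surjectivity; here Lemma 2 delivers it directly. Given $\bar{f} \in \bar{\A}_\sharp(X)$, any representative $f$ lies in $\A_\sharp(X) = \E(X)$ by Lemma 2, so $f \cdot \E_\sharp(X)$ is a legitimate coset and $\Psi(f \cdot \E_\sharp(X)) = \bar{f}$. I do not anticipate any serious obstacle, since the argument is essentially a mechanical rerun of Theorem 5 with Lemma 2 supplying the surjectivity that was previously obtained from the hypothesis $N\E(X) = n$.
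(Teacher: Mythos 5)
Your proposal is correct and matches the paper's intended argument: the paper states this corollary without proof as the $n=\infty$ case of the preceding theorem, where well-definedness, injectivity, and the homomorphism property carry over verbatim and surjectivity comes from $\A_\sharp(X)=\E(X)$ (Lemma 2), exactly as you describe.
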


\begin{cor} If $\E_\sharp^n(X)=\{id_X\}$ for some positive integer
$n$, $\bar{\A}_\sharp(X)\cong \E(X)$.
\begin{proof} Let $\E_\sharp^n(X)=\{id_X\}$. Since
$\E_\sharp(X)\subseteq\cdots\subseteq \E_\sharp^n(X)$,
$\E_\sharp(X)=\{id_X\}$. Thus, $\bar{\A}_\sharp(X)\cong \E(X)$ by
Corollary 5.
\end{proof}

\end{cor}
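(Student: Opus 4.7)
The plan is to chain together two facts already in the paper: the decreasing filtration of the subgroups $\E_\sharp^k(X)$ and the isomorphism from Corollary 5, so that the hypothesis can be pushed down to $\E_\sharp(X)$.

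First I would recall the inclusion $\E_\sharp(X) \subseteq \cdots \subseteq \E_\sharp^{n+1}(X) \subseteq \E_\sharp^n(X)$ established in Section~\ref{section1}. If $\E_\sharp^n(X) = \{id_X\}$, then every term earlier in the chain is sandwiched between $\{id_X\}$ and $\{id_X\}$, and in particular $\E_\sharp(X) = \{id_X\}$. (Here I use only that $\E_\sharp^k(X)$ contains the homotopy class of $id_X$ for every $k$, which is immediate since $\pi_i(id_X) = id_{\pi_i(X)}$.)

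Next I would invoke Corollary 5, which gives $\bar{\A}_\sharp(X) \cong \E(X)/\E_\sharp(X)$. Substituting $\E_\sharp(X) = \{id_X\}$ collapses the quotient, yielding $\bar{\A}_\sharp(X) \cong \E(X)/\{id_X\} \cong \E(X)$, which is the desired isomorphism.

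The proof is essentially a two-line consequence of results already in hand, so there is no genuine obstacle: the only point requiring care is making sure the filtration is correctly interpreted (the subgroup gets \emph{smaller} as $k$ grows, so a trivial term at level $n$ forces all earlier terms, including the infinite intersection $\E_\sharp(X)$, to be trivial), after which Corollary 5 finishes the argument.
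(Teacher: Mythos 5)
Your proposal is correct and follows exactly the paper's own argument: the inclusion chain $\E_\sharp(X)\subseteq\cdots\subseteq\E_\sharp^n(X)$ forces $\E_\sharp(X)=\{id_X\}$, and then Corollary 5 collapses the quotient $\E(X)/\E_\sharp(X)$ to $\E(X)$. The only difference is that you spell out why the sandwich works (each term contains $id_X$), which the paper leaves implicit.
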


\

\noindent{\bf Example 5.} In \cite{CL}, It was shown that
$\E_\sharp^{n+2}(M(\mathbb{Z}_q, n+1)\vee (M(\mathbb{Z}_p,
n))=\{id_X\}$ for $n>4$ if $q$ and $p$ are odd. Thus,
$$\bar{\A}_\sharp(M(\mathbb{Z}_q, n+1)\vee (M(\mathbb{Z}_p,
n))\cong \E(M(\mathbb{Z}_q, n+1)\vee (M(\mathbb{Z}_p, n))$$ by
Corollary 6.\\

\noindent{\bf Acknowledgements}\\

We would like to thank the referee(s) for offering other suggestions and comments
that improved the quality of the paper.

\end{document}